\theoremstyle{plain}
\newtheorem{theorem}{Theorem}
\newtheorem{proposition}[theorem]{Proposition}
\theoremstyle{definition}
\newcommand{\E}{{\mathbb{E}}}
\newcommand{\Var}{\mathrm{Var}}
\newcommand{\1}{\mathbbm{1}}
\let\oldmarginpar\marginpar
\renewcommand\marginpar[1]{\-\oldmarginpar[\raggedleft\footnotesize #1]%
{\raggedright\footnotesize #1}}
\definecolor{lightyellow}{RGB}{255,255,102}
\begin{document}

\title[]%
{The Bruss-Robertson Inequality: \\ Elaborations, Extensions, and Applications}
\author[]
{J. Michael Steele}

\thanks{J. M.
Steele:  Department of Statistics, The Wharton School,
University of Pennsylvania, 3730 Walnut Street, Philadelphia, PA, 19104.
Email address: \texttt{steele@wharton.upenn.edu}}

\citationmode{full}

\begin{abstract} The Bruss-Robertson inequality gives a bound on the
maximal number of elements of a random sample whose sum is less than a specified
value, and the extension of that inequality which is given here
neither requires the independence of the summands nor requires the equality of their marginal distributions.
A review is also given of the applications of the Bruss-Robertson inequality,
especially the applications to problems of combinatorial optimization such as the sequential
knapsack problem and the sequential monotone subsequence selection problem.

        \bigskip

        \noindent {\sc Key Words.} Order statistical inequalities, sequential knapsack problem, sequential monotone subsequence problem,
        sequential selection, online selection, Markov decision problems, Bellman equation.

        \smallskip

        \noindent {\sc Mathematics Subject Classification (2010).}
        Primary: 60C05, 60G40, 90C40; Secondary:  60F99, 90C27, 90C39

\end{abstract}


\maketitle



\section{Bruss-Robertson Inequality}\label{se:BR Inequality}

Here, at first,  we consider a finite sequence of
non-negative independent random variables $X_i$, $i=1,2, \ldots, n$  with a common continuous distribution function $F$, and,
given a real value $s>0$,
we are primarily concerned with the random variable
\begin{equation}\label{eq:BR maximal funtion}
M_n^*(s)=\max \{ \,|A|: \,\sum_{i \in A} X_i \leq s \},
\end{equation}
where, as usual, we use $|A|$ to denote the cardinality of the subset of integers $A \subset [n]=\{1,2, \ldots, n\}$.
We call $M_n^*(s)$ the Bruss-Robertson maximal function, and, one should note that in terms of the traditional order statistics,
$$
X_{n,1} < X_{n,2} < \cdots <X_{n,n},
$$
one can also write $M_n^*(s) =\max \{k: X_{n,1} + X_{n,2} +\cdots +X_{n,k} \leq s \}$.

In \citeasnoun{BruRob:AAP1991} it was found
that the expectation of the maximal function
$M_n^*(s)$ has an elegant bound in terms of the distribution function $F$ and a natural threshold value $t(n,s)$
that one defines by the implicit relation
\begin{equation}\label{eq:threshold def}
n \int_0^{t(n,s)} x \, dF(x) = s.
\end{equation}
Specifically, one learns from \citeasnoun[p.~622]{BruRob:AAP1991} that
\begin{equation}\label{eq:BR inequality}
\E[M_n^*(s)]\leq n F(t(n,s)),
\end{equation}
and the main goal here is to explore this inequality with an eye toward its mastery, its extensions and its combinatorial applications.

To gain a quick appreciation of the potential of the bound \eqref{eq:BR inequality}, it is useful to take $F$ to be the uniform distribution on $[0,1]$. By \eqref{eq:threshold def}
we have $t(n,s)=(2s/n)^{1/2}$ provided that $(2s/n)^{1/2}\leq 1$,
so for $s=1$ we find from \eqref{eq:BR inequality} that
for uniformly distributed random variables one always has
\begin{equation}\label{eq:BR inequality unif}
\E[\max \{ |A|: \sum_{i \in A \subset [n]} X_i \leq 1 \}] \leq \sqrt{2n}.
\end{equation}
This tidy bound already points the way to some of the most informative
combinatorial applications of the Bruss-Robertson inequality \eqref{eq:BR inequality}.

The next section elaborates on the proof of the Bruss-Robertson maximal inequality, and
in Section \ref{se:BR Extension}
we then see how the argument of Section \ref{se:BR Inequality Proof} needs only minor modifications in order to provide
an inequality of unexpected generality. After illustrating this new inequality
with three examples in Section \ref{se:three examples},
we turn in Section \ref{se:LIS connections} to the combinatorial applications.
Finally, Section \ref{se:conclusion} recalls other applications of the Bruss-Robinson maximal function including
recent applications to the theory of resource
dependent branching processes and the mathematical models of societal organization.

\section{An Elaboration of the Original Proof}\label{se:BR Inequality Proof}

The original proof of the Bruss-Robertson inequality \eqref{eq:BR inequality} is not long or difficult, but by
a reformulation and elaboration of that proof one does gain some concrete benefits.
These benefits are explained in detail in the next section, so, for the moment, we just focus on the proof of \eqref{eq:BR inequality}.

First, by the continuity of the joint distribution of $(X_i: i \in [n])$,
one finds that there is a unique set $A\subset [n]$ that attains the maximum
in the definition \eqref{eq:BR maximal funtion} of $M_n^*(s)$.
We denote this subset by $A(n,s)$, and we also
introduce a second set $B(n,s)\subset [n]$ that we define by setting
\begin{equation}\label{eq: B def}
B(n,s)= \{i: X_i \leq t(n,s) \},
\end{equation}
where $t(n,s)$ is the threshold value determined by the implicit relation \eqref{eq:threshold def}.

The idea behind the proof of the maximal inequality \eqref{eq:BR inequality} is to compare the sets $A(n,s)$ and $B(n,s)$,
together with their associated sums,
\begin{equation}\label{eq:two sums}
S_{A(n,s)}=\sum_{i \in A(n,s)} X_i \quad \text{and} \quad S_{B(n,s)}=\sum_{i \in B(n,s)} X_i.
\end{equation}
Here it is useful to note that by the definitions of these sums one has the immediate relations
\begin{equation}\label{eq:two relations}
S_{A(n,s)}\leq s \quad \text{and} \quad \E[S_{B(n,s)}]=n \int_0^{t(n,s)} x \, dF(x) = s.
\end{equation}

Now, by its definition, $S_{A(n,s)}$  is a partial sums of order statistics, and, since the summands of
$S_{B(n,s)}$ consists precisely of the values $X_{n,i}$ with $X_{n,i} \leq t(n,s)$, we see that
$S_{B(n,s)}$ is also equal to a partial sum of order statistics, even though the partial sums of $S_{B(n,s)}$ are not typically
partial sums of the order statistics.
These observations will help us with  estimations that depend on the relative sizes of the sum $S_{A(n,s)}$  and $S_{B(n,s)}$.

For example, if
$S_{B(n,s)}\leq S_{A(n,s)}$ then one has  $B(n,s) \subset A(n,s)$. Moreover, the summands $X_i$ with $i \in A(n,s) \setminus B(n,s)$ are all bounded
below by $t(n,s)$, so we have the bound
$$
S_{B(n,s)} + t(n,s)\{|A(n,s)|-|B(n,s)|\} \leq S_{A(n,s)} \quad \text{if } S_{B(n,s)}\leq S_{A(n,s)}.
$$
Similarly, if
$S_{A(n,s)} \leq S_{B(n,s)}$ then $A(n,s) \subset B(n,s)$ and the summands $X_i$ with $i \in B(n,s) \setminus A(n,s)$ are all bounded
above by $t(n,s)$; so, in this case,  we have the bound
$$
S_{B(n,s)}  \leq S_{A(n,s)} + t(n,s)\{|B(n,s)|-|A(n,s)|\} \quad \text{if } S_{A(n,s)}\leq S_{B(n,s)}.
$$
Taken together, the last two relations tell us that whatever the relative sizes of $S_{A(n,s)}$ and $S_{B(n,s)}$ may be,
one always has the key relation
\begin{equation}\label{eq:Key RV inequality}
t(n,s)\{|A(n,s)|-|B(n,s)|\} \leq S_{A(n,s)}- S_{B(n,s)}.
\end{equation}
Here $t(n,s)>0$ is a constant, $|A(n,s)|=M_n^*(s)$, and by \eqref{eq:two relations} the righthand side has non-positive expectation, so
taking the expectations in \eqref{eq:Key RV inequality} gives us
$$
\E[M_n^*(s)]\leq \E[|B(n,s)|]=\E[\sum_{i=1}^n \1(X_i \leq t(n,s))]=nF(t(n,s)),
$$
and the proof of the Bruss-Robertson inequality \eqref{eq:BR inequality} is complete.

\section{Extension of the Bruss-Robertson Inequality}\label{se:BR Extension}

The preceding argument has been organized so that it may be easily modified to give a
bound that is notably more general. Specifically, one does not need independence
for the Bruss-Robertson inequality \eqref{eq:BR inequality}. Moreover, after an appropriate modification of the definition of $t(n,s)$
one does not need to require that the observations have a common distribution.

\begin{theorem}[Extended Bruss-Robertson Inequality]\label{th:EBR Inequality}
If for each $i \in [n]$ the non-negative random variable $X_i$ has the continuous distribution
$F_i$, and if one defines $t(n,s)$ by the implicit relation
\begin{equation}\label{eq:new def of t(n,s)}
s=\sum_{i=1}^n \int_0^{t(n,s)} x \, dF_i(x),
\end{equation}
then one has
\begin{equation}\label{eq:RB inequality generalized}
\E[\max \big\{ |A|: \sum_{i \in A \subset [n]} X_i \leq s \big\}] \leq \sum_{i=1}^n F_i(t(n,s)).
\end{equation}
\end{theorem}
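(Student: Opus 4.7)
The plan is to rerun the argument of Section~\ref{se:BR Inequality Proof} essentially verbatim, observing that neither the independence of the $X_i$ nor the equality of their marginals was actually used in the pathwise half of the argument. Those hypotheses entered only at the moment when expectations were taken, and there linearity alone (which needs neither) suffices, provided the threshold $t(n,s)$ is redefined as in \eqref{eq:new def of t(n,s)}.

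First I would redefine $B(n,s)=\{i:X_i\leq t(n,s)\}$ using the new $t(n,s)$ and note that $A(n,s)$ is still a threshold set in the values $X_i$: maximizing $|A|$ subject to the budget $\sum_{i\in A}X_i\leq s$ on non-negative summands amounts to sorting and keeping the smallest, so there is some (random) cutoff $c$ with $A(n,s)=\{i:X_i\leq c\}$. Since both $A(n,s)$ and $B(n,s)$ are $X$-threshold sets, one of them contains the other, and the two-case analysis of Section~\ref{se:BR Inequality Proof} goes through pointwise and unchanged to yield the pathwise inequality
\begin{equation*}
t(n,s)\bigl\{|A(n,s)|-|B(n,s)|\bigr\} \leq S_{A(n,s)}-S_{B(n,s)}.
\end{equation*}
This is exactly the step at which any dependence structure among the $X_i$ is invisible.

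Next I would take expectations. By linearity,
\begin{equation*}
\E[S_{B(n,s)}] = \sum_{i=1}^n \E[X_i\1(X_i\leq t(n,s))] = \sum_{i=1}^n \int_0^{t(n,s)} x\,dF_i(x) = s
\end{equation*}
by the new defining relation \eqref{eq:new def of t(n,s)}, while $S_{A(n,s)}\leq s$ almost surely gives $\E[S_{A(n,s)}]\leq s$. Linearity also gives $\E[|B(n,s)|]=\sum_{i=1}^n F_i(t(n,s))$ without any need for independence. Dividing the expectation of the pathwise inequality by the positive constant $t(n,s)$ then yields \eqref{eq:RB inequality generalized}.

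The peripheral items to check are minor. One needs $t(n,s)>0$ to exist uniquely, which follows from continuity and monotonicity in $t$ of the map $t\mapsto \sum_i\int_0^t x\,dF_i(x)$ whenever $s$ lies in its range; outside that range the asserted bound is at most $n$ and so is trivial. One also needs the maximizer $A(n,s)$ to be almost surely unique, which calls for some mild genericity on the joint law (enough to rule out ties among partial sums) but not independence. In short, there is no substantive obstacle: the extension works precisely because the original proof was, in hindsight, pathwise up to a single unavoidable invocation of linearity of expectation, and the new definition of $t(n,s)$ is engineered to make that step land on exactly the same equality $\E[S_{B(n,s)}]=s$ as before.
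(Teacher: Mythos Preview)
Your approach is the same as the paper's: rerun the pathwise argument of Section~\ref{se:BR Inequality Proof} and observe that only linearity of expectation is needed at the end, with $t(n,s)$ redefined so that $\E[S_{B(n,s)}]=s$ again. The one substantive point of difference is exactly where the paper says ``some additional care is needed,'' namely the definition of $A(n,s)$.

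You write $A(n,s)=\{i:X_i\le c\}$ for a random cutoff $c$ and then concede that uniqueness of the maximizer ``calls for some mild genericity on the joint law.'' But no such genericity is hypothesized, and it can fail badly: in Example~2 of Section~\ref{se:three examples} one has $X_i=X$ for all $i$, so ties occur with probability one, and the only threshold sets in the $X$-values are $\emptyset$ and $[n]$; typically neither has cardinality $M_n^*(1)$, so your description of $A(n,s)$ and the nested-set comparison both break down. The paper's remedy is to introduce the total order $X_i\prec X_j$ defined by $X_i<X_j$, or $X_i=X_j$ and $i<j$, and to take $A(n,s)$ to be the longest $\prec$-initial segment whose sum is at most $s$. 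Since $B(n,s)=\{i:X_i\le t(n,s)\}$ is also a $\prec$-initial segment (if $X_i\prec X_j$ and $X_j\le t(n,s)$ then $X_i\le t(n,s)$), one of $A(n,s)$, $B(n,s)$ contains the other and the two-case comparison of Section~\ref{se:BR Inequality Proof} goes through unchanged. With this one-line fix replacing your genericity assumption, your argument coincides with the paper's.
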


When the random variables $X_i$, $i \in [n]$, have a common distribution, then the defining condition \eqref{eq:new def of t(n,s)}
for $t(n,s)$ just recaptures the classical definition \eqref{eq:threshold def} of the traditional threshold value. In the same way,
the upper bound in
\eqref{eq:RB inequality generalized} also recaptures the upper bound of the original Bruss-Robertson inequality \eqref{eq:BR inequality}.

The proof of Theorem \ref{th:EBR Inequality} requires only some light modifications of the argument of Section \ref{se:BR Inequality Proof}.
Just as before, one defines $B(n,s)$ by \eqref{eq: B def},
but now some additional care is needed with the definition of  $A(n,s)$.

To keep
as close as possible to the argument of Section \ref{se:BR Inequality Proof},
we first define a total order on the set $\{X_i: i \in [n]\}$ by writing $X_i \prec  X_j$ if either one has $X_i < X_j$, or if
one has both $X_i=X_j$ and $i < j$.
Using this order, there is now unique permutation $\pi:[n] \rightarrow [n]$ such that
$$
X_{\pi(1)} \prec X_{\pi(2)} \prec \cdots \prec X_{\pi(n)},
$$
and one can then take $A(n,s)$
to be largest set $A \subset [n]$ of the form
\begin{equation}\label{eq:second def of A(n,s)}
A= \{ \pi(i): X_{\pi(1)} + X_{\pi(2)} + \cdots + X_{\pi(k)} \leq s \}.
\end{equation}

Given these modifications, one can then proceed with the proof of key inequality \eqref{eq:Key RV inequality}
essentially without change. We use the same definitions \eqref{eq:two sums} for the sums $S_{A(n,s)}$ and $S_{B(n,s)}$, so
by the new definition \eqref{eq:new def of t(n,s)} of $t(n,s)$, one now has
\begin{equation*}\label{eq:E of S sub B}
\E[S_{B(n,s)}]=\sum_{i=1}^n \int_0^{t(n,s)} x \, dF_i(x) = s.
\end{equation*}
Since we still have $S_{A(n,s)}\leq s$, the expectation on the right side of  \eqref{eq:Key RV inequality} is non-positive, and one
can complete
the proof of Theorem \ref{th:EBR Inequality} just as one completed the proof \eqref{eq:BR inequality} in Section \ref{se:BR Inequality Proof}.

At first it may seem surprising that one does not need independence in this
theorem, but in short order this becomes
self-evident. As the organization of
Section \ref{se:BR Inequality Proof} makes explicit, none of the required calculations
depend on the joint distribution of $(X_i: i \in[n] )$. More specifically, one just needs to note that
the argument of Section \ref{se:BR Inequality Proof}
depends exclusively on pointwise bounds and the linearity of expectation.

\section{Three Illustrative Examples}\label{se:three examples}

There are times when it is difficult to solve the non-linear relation \eqref{eq:new def of t(n,s)} for $t(n,s)$, but there
are also informative situations where this does not pose a problem, such as the three examples of this section. The first example
shows that one can deal quite easily with uniformly distributed random variables on multiple scales. The other two examples show
that when one considers dependent random variables, there are curious new phenomena that can arise.

\medskip
\noindent
{\sc Example 1. Basic Benefits}
\smallskip

Here,  for each $i \in [n]$ we take $X_i$ to be uniformly distributed on the real interval $[0,i]$, but we do not require that these
random variables to be independent.
If we also take $0 < s \leq 1$ and take $n \geq 4$ (for later convenience) then the defining condition \eqref{eq:new def of t(n,s)} tells us
$$
s=\frac{1}{2} \sum_{i=1}^n \frac{1}{i} t^2(n,s)=\frac{1}{2} t^2(n,s) H_n \quad \text{or} \quad  t(n,s)=\sqrt{2s/H_n},
$$
where as usual $H_n$ denotes the $n$'th harmonic number.
In particular, for $s=1$  the bound \eqref{eq:RB inequality generalized}
tells us that
\begin{equation*}
\E[\max \big\{ |A|: \sum_{i \in A} X_i \leq 1 \big\}] \leq \sum_{i=1}^n F_i(t(n,s))=\sum_{i=1}^n \frac{1}{i}({2}/{H_n})^{1/2}
=(2 H_n)^{1/2},
\end{equation*}
where we use $n\geq 4$ to assure that $H_n> 2$ and thus to keep the computation as simple as possible.
This bound offers an informative complement to \eqref{eq:BR inequality unif}, and, here again, one may underscore that no independence is required
for this inequality. The bound depends only on the marginal distribution of the $X_i$, $i \in [n]$.

\pagebreak
\medskip
\noindent
{\sc Example 2. Extreme Dependence}
\smallskip

Here we take $X$ to have the uniform distribution on $[0,1]$, and we set $X_i=X$ for all $i \in [n]$.
For specificity, we take $s=1$, and we find
from \eqref{eq:new def of t(n,s)} we find that $t(n,s)=(2s/n)^{1/2}$. Thus, just as one had for a sample of $n$ independent, uniformly distributed
random variables, the upper bound provided by \eqref{eq:RB inequality generalized} is given by $(2n)^{1/2}$.

Nevertheless, in this case the bound is not at all sharp. To see how poorly it does, we first note that
\begin{equation}\label{eq: example value of M}
M_n^*(1)=\max\{|A| : \sum_{i\in A} X_i \leq 1|]= \min \{n, \lfloor 1/X \rfloor \}.
\end{equation}
To evaluate the expectation of $M_n^*(1)$, we first recall that there is a useful variation of the usual formula for Euler's constant $\gamma=0.5772\ldots$ which was discovered by \citeasnoun{Polya1917} and
which tells us that
$$
\int_0^1 \left\{ \frac{1}{x} - \bigg\lfloor \frac{1}{x} \bigg\rfloor \right\} \, dx =1 -\gamma.
$$
Consequently, if we write the domain of integration as
$[0,1/n] \cup [1/n, 1]$ and note  that the integrand is bounded by $1$, then we have
$$
\int_0^1 \left\{
\min(n, \frac{1}{x})
- \min( n, \lfloor \frac{1}{x})  \rfloor )
\right\}
\, dx =1 -\gamma +O(\frac{1}{n}).
$$
The integral of the first term equals $1+\log n$, so upon returning to
\eqref{eq: example value of M} one finds
\begin{equation}\label{eq:ex2 formula}
\E[M_n^*(1)] =\E[\min \{n, \lfloor 1/X \rfloor \}]= \log n +\gamma - O(\frac{1}{n}).
\end{equation}
When we compare this to the
$(2n)^{1/2}$ bound that we get
from \eqref{eq:RB inequality generalized}, we see that it
falls uncomfortably far from the actual value  of $\E[M_n^*(1)]$. This illustrates in a simple way that
there is a price to be paid for the generality of Theorem \ref{th:EBR Inequality}.

One could have come to a similar conclusion with estimates that are less precise than \eqref{eq:ex2 formula}.
Nevertheless, there is some independent benefit to seeing Euler's constant emerge from the knapsack problem.
More critically, this example illustrates the reason for the more refined definition of $A(n,s)$ that was
introduced in \eqref{eq:second def of A(n,s)}. Here the
maximum in \eqref{eq: example value of M} is typically attained for many different choices of $A \subset [n]$.
Nevertheless, with help from the total order $\prec$ one regains uniqueness in definition of $A(s,n)$,
and, as a consequence, the logic of Section \ref{se:BR Inequality} serves one just as well as it did before.

\medskip
\noindent
{\sc Example 3. Beta Densities and a Long Monotone Sequence}
\smallskip

Now, for each $i \in [n]$ we take $X_i$ to have the $\texttt{Beta}(i, n-i+1)$ density, so in particular, $X_i$ has the
same marginal distribution as the $i$'th smallest value $U_{(i)}$ in a sample $\{U_1, U_2, \ldots, U_n\}$ of $n$ independent
random variables with the uniform distribution on $[0,1]$. Still, for the moment we make no assumption about the joint
distribution of $(X_i: i \in [n])$.
By the condition
\eqref{eq:new def of t(n,s)} we then have
\begin{align*}
s&=\sum_{i=1}^n \int_0^{t(n,s)} x \, dF_i(x) =
\E\big[ \sum_{i=1}^n  U_{(i)} \1[ U_{(i)} \in [0,{t(n,s)}] \big]\\
&=\E\big[ \sum_{i=1}^n  U_{i} \1[ U_i \in [0,{t(n,s)}] \big]= \frac{1}{2} t^2(n,s) n,
\end{align*}
so in this case we again find $t(n,s)=(2s/n)^{1/2}$. Thus, by \eqref{eq:RB inequality generalized} we have
the upper bound $(2n)^{1/2}$ when $s=1$, and our bound echoes what we know from the classical inequality \eqref{eq:BR inequality unif}.

This inference depends only on our assumption about the marginal distributions, but
one can go a bit further if we assume the equality of the joint distributions $(X_i: i \in [n])$ and $(U_{(i)}: i \in [n])$. In particular,
one finds in this case that our upper bound $(2n)^{1/2}$ is essentially tight.

It is also evident in this case that one has
$X_1 < X_2 < \cdots < X_n$, and this observation
would be quite uninteresting, except that in the next section we will find in
that in the independent case there is a remarkable link between monotone subsequences and the Bruss-Robertson inequality.
Thus, it is something of a curiosity to see how thoroughly this connection can be broken while still retaining the bound given by
the general
inequality of Theorem \ref{th:EBR Inequality}.

\section{Sequential Subsequence Selection Problems}\label{se:LIS connections}

A basic source of interest in the Bruss-Robertson inequality \eqref{eq:BR inequality} and its generalization \eqref{eq:RB inequality generalized}
is that these results lead to \emph{a priori} upper bounds for two well studied problems in combinatorial optimization.
In particular, in the classical case of independent uniformly distributed random variables,
the Bruss-Robertson inequality \eqref{eq:BR inequality} gives bounds that are that are essentially sharp
for both the sequential knapsack problem and the sequential increasing subsequence selection problem.

In the sequential knapsack problem, one observes a sequence of $n$ independent non-negative random variables $X_1, X_2, \ldots , X_n$
with a fixed, known distribution $F$. One is also given real value $x \in [0,\infty)$ that one regards as the capacity of a knapsack into which
selected items are placed.  The
observations are observed sequentially, and, at time $i$, when $X_i$ is first observed, one either selects $X_i$ for inclusion in the knapsack or
else $X_i$  is rejected from any future consideration. The goal is to maximize the expected
\emph{number} of items that are included in the knapsack. Since the Bruss-Robertson maximal function \eqref{eq:BR maximal funtion} tells one
how well one could if one knew in advance all of the values $\{X_i: i \in [n]\}$, it is
evident that no strategy for making sequential choices can ever lead to more affirmative choices than $M_n(x)$.

The sequential knapsack problem is a Markov decision problem that is known to have an optimal sequential selection strategy that is given
by a unique non-randomized Markovian decision rule. When one follows this optimal policy beginning with $n$ values to be observed and
with an initial knapsack capacity of $x$, the expected number of selections that one makes is denoted by $v_n(x)$. This is called the value function for the Markov decision problem, and, it can be calculated by the recursion relation
\begin{equation}\label{eq:Bellman Knapsack}
v_n(x)=(1-F(x)) v_{n-1}(x) + \int_0^x \max \{ v_{n-1}(x), 1+ v_{n-1}(x-y) \}\, dF(y).
\end{equation}
Specifically, one begins with the obvious relation $v_0(x) \equiv 0$, and one computes $v_n(x)$ by iteration of \eqref{eq:Bellman Knapsack}.

This is called the Bellman equation (or optimality equation)
for the sequential knapsack problem, and it is easy to justify. The first term comes from the possibility that
$X_1$ is too large to fit into the knapsack, and this event happens with probability $1-F(x)$. In this case, one cannot accept $X_1$, so one
is left with the original capacity $x$ and there are only $n-1$ more values to be observed. This gives one the first term of
\eqref{eq:Bellman Knapsack}.

For the more interesting second
term of \eqref{eq:Bellman Knapsack}, we consider the case where one has
$X_1=y \leq x$, so one has the option either to accept or to reject $X_1$. If we reject $X_1$, we have no increment to our knapsack count
and we have the value $v_{n-1}(x)$ for the expected number of selections from the remaining values. On the other hand, if we accept $X_1$,
we have added $1$ to our knapsack count. We also have a remaining capacity of $x-y$, and we have $n-1$ observations to be seen. One takes the
best of these two values, and this gives us the second term of \eqref{eq:Bellman Knapsack}.

Now we consider the problem of sequential selection of a monotone decreasing
subsequence. Specifically, we observe sequentially $n$ independent
random variables $X_1, X_2, \ldots, X_n$ with the common continuous distribution $F$, and we make monotonically decreasing choices
$$
X_{i_1} > X_{i_2} > \cdots > X_{i_k}.
$$
Our goal here is to maximize the expected
number of choices that we make. Again we have a Markov decision problem with an unique optimal non-randomized Markov decision policy.
Here, prior to making any selection, we take the state variable $x$ to be the supremum of the support of $F$, which may be infinity. After
we have made at least one selection, we take the state variable $x$ to be the value of the last selection that was made.

Now we write $\widetilde{v}_n(x)$ for the expected number of selections made under the optimal policy
when the state variable is $x$ and where there are $n$ observations that remain to be observed.
In this case the Bellman equation given by
\citeasnoun{SamSte:AP1981} can be written as
\begin{equation}\label{eq:Bellman SLDS}
\widetilde{v}_n(x)=(1-F(x)) \widetilde{v}_{n-1}(x) + \int_0^x \max \{ \widetilde{v}_{n-1}(x), 1+ \widetilde{v}_{n-1}(y) \}\, dF(y),
\end{equation}
where again one has the obvious relation $\widetilde{v}_n(x) \equiv 0$ for the initial value.
In \eqref{eq:Bellman SLDS}
the decision to select $X_1=y$ would move the state variable to $y$, so here we have
$1+\widetilde{v}_{n-1}(y)$ where earlier we had the term  $1+ v_{n-1}(x-y)$ in the knapsack Bellman equation \eqref{eq:Bellman Knapsack}.
In knapsack problem the state variable moves from $x$ to $x-y$ when $X_1=y$ is selected.

In general, the solutions of \eqref{eq:Bellman Knapsack} and
\eqref{eq:Bellman SLDS} are distinct. Nevertheless, \citeasnoun{CofFlaWeb:AAP1987} observed that $v_n(x)$
and $\widetilde{v}_n(x)$ are equal when the observations are uniformly distributed.
This can be proved formally by an inductive argument that uses the two Bellman equations
\eqref{eq:Bellman Knapsack} and \eqref{eq:Bellman SLDS}.

\begin{proposition} \label{pr: equal vs}
If $F(x)=x$ for $0 \leq x \leq 1$, then one has
$$
v_n(x) =\widetilde{v}_n(x) \quad \text{for all } n \geq 0 \, \text{and } 0 \leq x \leq 1.
$$
\end{proposition}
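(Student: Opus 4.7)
The plan is a straightforward induction on $n$, with the entire content of the argument being a change of variables in the knapsack integral that is possible precisely because the uniform density is constant.

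For the base case, both $v_0(x) \equiv 0$ and $\widetilde{v}_0(x) \equiv 0$ by the initial conditions attached to \eqref{eq:Bellman Knapsack} and \eqref{eq:Bellman SLDS}, so the identity holds for $n=0$. For the inductive step, I would assume $v_{n-1}(x) = \widetilde{v}_{n-1}(x)$ for all $x \in [0,1]$ and then rewrite the knapsack Bellman equation \eqref{eq:Bellman Knapsack}. With $F(x)=x$ on $[0,1]$, the leading factor $(1-F(x))$ matches $(1-F(x))$ in \eqref{eq:Bellman SLDS} immediately, so it remains only to compare the two integrals.

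The key step is the substitution $z = x - y$ in the knapsack integral. Since $dF(y)=dy$ on $[0,1]$, the measure $dy$ is invariant under $y \mapsto x-y$, the limits $y=0, x$ map to $z=x,0$, and the term $v_{n-1}(x)$ inside the maximum is unaffected by the substitution. Consequently
\begin{equation*}
\int_0^x \max\{v_{n-1}(x),\, 1+v_{n-1}(x-y)\}\,dy
= \int_0^x \max\{v_{n-1}(x),\, 1+v_{n-1}(z)\}\,dz.
\end{equation*}
Applying the inductive hypothesis to replace every $v_{n-1}$ by $\widetilde{v}_{n-1}$ on the right-hand side and comparing with \eqref{eq:Bellman SLDS} then yields $v_n(x) = \widetilde{v}_n(x)$, completing the induction.

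There is no real obstacle to overcome: the only subtle point is recognizing that the identification of \eqref{eq:Bellman Knapsack} with \eqref{eq:Bellman SLDS} relies specifically on two features of the uniform law, namely (i) the constancy of the density, which makes the reflection $y \mapsto x-y$ measure-preserving, and (ii) the fact that both the knapsack state transition $x \mapsto x-y$ and the monotone-subsequence state transition $x \mapsto y$ traverse the interval $[0,x]$ as $y$ ranges over $[0,x]$. Any deviation from uniformity would destroy this symmetry, which is consistent with the remark following \eqref{eq:Bellman SLDS} that in general the two value functions differ.
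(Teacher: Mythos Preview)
Your proposal is correct and follows essentially the same route as the paper: induction on $n$, with the inductive step carried by the reflection $y\mapsto x-y$ in the integral (which is measure-preserving for the uniform density on $[0,x]$) together with the induction hypothesis. The only cosmetic difference is the order of operations---the paper applies the induction hypothesis first and then the change of variables, whereas you do the substitution first---but the argument is otherwise identical.
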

\begin{proof}
For $n=0$ we have $v_0(x) =\widetilde{v}_0(x)=0$ for all $x \in [0,1]$, and this gives us the base case for an induction.
To make the inductive step from $n-1$ to $n$, we first use the Bellman equation \eqref{eq:Bellman Knapsack}
and then use the  induction hypothesis to get
\begin{align*}
v_n(x)&=(1-x) v_{n-1}(x) + \int_0^x \max \{ v_{n-1}(x), 1+ v_{n-1}(x-y) \}\, dy \\
& = (1-x) \widetilde{v}_{n-1}(x) + \int_0^x \max \{ \widetilde{v}_{n-1}(x), 1+ \widetilde{v}_{n-1}(x-y) \}\, dy \\
& = (1-x) \widetilde{v}_{n-1}(x) +\int_0^x \max \{ \widetilde{v}_{n-1}(x), 1+ \widetilde{v}_{n-1}(y) \}\, dy = \widetilde{v}_{n}(x),
\end{align*}
where in passing to the last line one uses the symmetry of the uniform measure on $[0,x] \subset [0,1]$.
Naturally, for the last equality one just needs to use the second Bellman equation \eqref{eq:Bellman SLDS}.
\end{proof}

Despite the equality of the value functions established by this proposition, no one has yet found any
direct choice-by-choice coupling between the sequential knapsack problem and the sequential monotone subsequence
selection problem.
Nevertheless, one can create a detailed linkage between these two problems that does yield more
than just the equality of the associated expected values.

The first step is to note that the equality of the value functions
permits one to construct optimal selection rules that can be applied simultaneously to the same sequence of
observations. The selections that are made will be different in the two problems, but one still finds useful
distributional relationships.

\medskip
\noindent
{\sc Threshold Strategies from Value Functions}
\smallskip

The essential observation is that the second term of the
Bellman equation \eqref{eq:Bellman SLDS} leads one almost immediately to the construction of an optimal
selection strategy for the monotone subsequence problem. These strategies lead one in turn to a more
detailed understanding of number of values that one actually selects.

First, one notes that it is easy to show (cf.~\citeasnoun{SamSte:AP1981}) that
there is a unique $y \in [0,1]$ that solves the ``equation of indifference":
$$
\widetilde{v}_{n-1}(x)= 1+ \widetilde{v}_{n-1}(y).
$$
We denote this solution by $\alpha_n(x)$, and we use its values to determine the rule for making the sequential selections.

At the moment just before
$X_i$ is presented, we face the problem of selecting a monotone sequence from among the $n-i+1$ values $X_i, X_{i+1}, \ldots, X_n$,
and if we let $\widetilde{S}_{i-1}$ denote the last of the values $X_1, X_2, \ldots, X_{i-1}$ that has been selected so far,
then we can only select
$X_i$ if it is not greater than the most recently selected value $\widetilde{S}_{i-1}$.
In fact, one would choose to select $X_i$ if and only if it falls in the interval
$[S_{i-1}, S_{i-1}-\alpha_{n-i+1}(\widetilde{S}_{i-1})]$.
Thus, the actual
number of values selected out of the original $n$ is the random variable given by
\begin{equation*}\label{eq:monotone achieved}
\widetilde{V}_n \stackrel{\text{def}}{=} \sum_{i=1}^n \1 (X_i \in [\widetilde{S}_{i-1},\, \widetilde{S}_{i-1}-\alpha_{n-i+1}(\widetilde{S}_{i-1})]).
\end{equation*}
By the same logic, one finds that in the sequential knapsack problem the number of values
that are selected is by the optimal selection rule can be written as
\begin{equation*}\label{eq:knapsack achieved}
{V}_n \stackrel{\text{def}}{=} \sum_{i=1}^n \1 (X_i \in [0, \alpha_{n-i+1}(S_{i-1})]),
\end{equation*}
where now $S_{i-1}$ denotes the capacity that remains after all of the knapsack selections have been made
from the set of values  $X_1, X_2, \ldots, X_{i-1}$ that have already been observed.

By this parallel construction and by Proposition \ref{pr: equal vs}, we have
$$\E[{V}_n]= v_n(1)=\widetilde{v}_{n}(1)=\E[\widetilde{V}_n],$$
but considerably more is true. Initially, one has $S_0=1= \widetilde{S}_0$,
so, one further finds the equality of the joint distributions of the vectors
$$
(S_0, S_1, \ldots, S_{n-1}) \quad \text{and} \quad (\widetilde{S}_0, \widetilde{S}_1, \ldots, \widetilde{S}_{n-1}),
$$
since the two processes $\{S_i: 0\leq i \leq n\}$ and $\{\widetilde{S}_i: 0\leq i \leq n\}$  are (temporally non-homonomous) Markov chains
that have the same transition kernel at each time epoch.

This equivalence tells us in turn that the partial sums
\begin{align*}
{V}_{n,k} & \stackrel{\text{def}}{=}
\sum_{i=1}^k \1 (X_i \in [0, \alpha_{n-i+1}(S_{i-1})]) \quad \text{and}\\
\widetilde{V}_{n,k} & \stackrel{\text{def}}{=}
\sum_{i=1}^k \1 (X_i \in [\widetilde{S}_{i-1},\, \widetilde{S}_{i-1}-\alpha_{n-i+1}(\widetilde{S}_{i-1})]),
\end{align*}
satisfy the distributional identity
\begin{equation}\label{eq:equivalence}
{V}_{n,k} \stackrel{\text{d}}{=} \widetilde{V}_{n,k}  \quad \text{for all } 1 \leq k \leq n.
\end{equation}
Nevertheless, the two processes
$\{ {V}_{n,k}: 1 \leq k \leq n \}$ and $\{\widetilde{V}_{n,k}: 1 \leq k \leq n \}$
are not equivalent as processes. Despite the equality of the marginal distributions, the joint distributions are wildly different.

\medskip
\noindent
{\sc Classical Consequences}
\smallskip

The Bruss-Robertson inequality \eqref{eq:BR inequality unif} tells us directly that $\E[{V}_n ] \leq \sqrt{2n}$, so,
by the distributional identity  \eqref{eq:equivalence}, we find indirectly that
\begin{equation}\label{eq:BRagain}
\E[\widetilde{V}_n ]  \leq \sqrt{2n} \quad \text{for all } n \geq 1.
\end{equation}
It turns out that \eqref{eq:BRagain} can be proved by a remarkable variety of methods. In particular,
\citeasnoun{Gne:JAP1999} gave a direct proof \eqref{eq:BRagain} where one can even accommodate a random sample size $N$
and where the upper bound of \eqref{eq:BRagain} is replaced with the natural proxy $(2 \E[N])^{1/2}$.
More recently, \citeasnoun{ArlottoMosselSteele:2015} gave two further proofs of \eqref{eq:BRagain} as consequences of
bounds that were developed for the \emph{quickest selection problem}, a sequential decision problem that
provides a kind of combinatorial dual
to the classical sequential selection problem.

The distributional identity \eqref{eq:equivalence} can also be used to make some notable inferences about the knapsack problem
from what has been discovered in the theory of sequential monotone selections. For example, by building on
the work of
\citeasnoun{BruDel:SPA2001} and \citeasnoun{BruDel:SPA2004}, it was found in
\citeasnoun{ArlNguSte:SPA2015} that one has
\begin{equation} \label{eq: Var and CLT for SMSP}
\Var[\widetilde{V}_n] \sim \frac{1}{3} \sqrt{2n} \quad \text{and } \quad
\frac
{\widetilde{V}_n -\sqrt{2n}}
{3^{-1/2} (2n)^{1/4}}
\Rightarrow {\rm N}(0,1).
\end{equation}
Thus, as a consequence of the distributional identity \eqref{eq:equivalence} one
has the same results for the knapsack variable $V_n$ for independent observations
with the uniform distribution on $[0,1]$.

It seems quite reasonable to conjecture that there are results that are
analogous to \eqref{eq: Var and CLT for SMSP} for the sequential knapsack problem
where the driving distribution $F$ is something other than the uniform. Nevertheless,
the proof of any such results is unlikely to be easy since the proofs of the
relations of \eqref{eq: Var and CLT for SMSP} required a sustained
investigation of the Bellman equation \eqref{eq:Bellman SLDS}.

Finally, one should also recall the \emph{non-sequential} (or clairvoyant) selection problem where
one studies the random variable $$L_n=\max\{k: X_{i_1}< X_{i_2}< \ldots < X_{i_k}, \, \, 1\leq i_1< i_2< \cdots <i_k \leq n \}.$$
This classic problem has a long history that is beautifully told in \citeasnoun{Rom:CUP2014}. Here the most relevant part of that story is that
\citeasnoun{BaikDeiftKurt99} found the asymptotic distribution of $L_n$, and, in particular, they found that one has
the asymptotic relation
\begin{equation}\label{eq:BDK99}
\E[L_n]= 2 \sqrt{n} -\alpha n^{1/6} + o(n^{1/6}) \quad \text{where } \alpha=1.77108...
\end{equation}

Ironically, it is still not known if the map $n \mapsto \E[L_n]$ is concave, even though this seems like an
exceptionally compelling conjecture.
The estimate \eqref{eq:BDK99} certainly suggest this, and, moreover,
we already know from \citeasnoun[p.~3604]{ArlNguSte:SPA2015} that for the analogous sequential problems
the corresponding map
$n \mapsto \E[\widetilde{V}_n ]=\E[{V}_n ]$
is indeed concave.

\section{Further Connections and Applications}\label{se:conclusion}

Here the whole goal has been to explain, extend, and explore the
upper bound given by the Bruss-Robertson inequality. Still, there is second side to
the Bruss-Robertson maximal function, and both lower bounds and asymptotic relations have been developed in
investigations by
\citeasnoun{CofFlaWeb:AAP1987},
\citeasnoun{BruRob:AAP1991}, and
\citeasnoun{RheTal:JAP1991}.
Furthermore,
\citeasnoun{BoshuizenKertzAAP1999} have even established the joint
convergence in distribution of the (suitably normalized) Bruss-Robertson maximal function and
a sequence of approximate solutions to the sequential knapsack problem, although this result does
not seem to speak directly to the problem of proving a wider analog of \eqref{eq: Var and CLT for SMSP}.

The applications that have been considered here were all taken
from combinatorial optimization. Nevertheless, there are several other
areas where the Bruss-Robertson maximal function \eqref{eq:BR maximal funtion} has a natural place. For example,
\citeasnoun{Gribonval-et-al-IEEE2012} use bounds from \citeasnoun{BruRob:AAP1991} in their study of
compressibility of high dimensional distributions.

Finally, one should note that the bounds of \citeasnoun{BruRob:AAP1991}
have a natural role in the theory
of resource dependent branching processes, or RDBPs. This is a rich theory
that in turn takes a special place in the recent work of \citeasnoun{BrussDuerinckxAAP2015} on
new mathematical models of
societal organization. These models have been further explained  for a broader (but still mathematical) audience in \citeasnoun{BrussJDMV2014},
where the theory of \citeasnoun{BrussDuerinckxAAP2015} is also applied to two contemporary European public policy issues.

\section*{Acknowledgements}

The author is pleased to thank Alessandro Arlotto and Peichao Peng for their useful comments on earlier incarnations of this
investigation.

\end{document}